    \numberwithin{equation}{section}
    \newcommand{\p}{\mathbb{P}}
    \newcommand{\Om}{\Omega}
\newcommand{\R}{\mathbb R}
\newcommand{\N}{\mathbb N}
\newcommand{\E}{\mathbb E}
\renewcommand{\phi}{\varphi}
\newcommand{\pee}{\ensuremath{\mathbb{P}}}
\newcommand{\hE}{\widehat{\mathbb{E}}}
\newcommand{\hP}{\widehat{\mathbb{P}}}
\newcommand{\dd}{\mathop{}\!\mathrm{d}}
\def\1{{\mathchoice {\rm 1\mskip-4mu l} {\rm 1\mskip-4mu l}
{\rm 1\mskip-4.5mu l} {\rm 1\mskip-5mu l}}}
\newtheorem{theorem}{{\small T}{\scriptsize HEOREM}}[section]
\newtheorem{corollary}{{\bf{\small C}{\scriptsize OROLLARY}}}[section]
\newtheorem{proposition}{{\bf{\small P}{\scriptsize ROPOSITION}}}[section]
\newtheorem{lemma}{{\bf{\small L}{\scriptsize EMMA}}}[section]
 \theoremstyle{definition}
\newtheorem{remark}{{\bf{\small R}{\scriptsize EMARK}}}[section]
\newtheorem{definition}{{\bf{\small D}{\scriptsize EFINITION}}}[section]
\newtheorem{induction}{{\bf{\small I}{\scriptsize NDUCTIVE HYPOTHESIS}}}[section]
\renewenvironment{proof}[1]
{\noindent{{\bf{\small{ P}{\scriptsize ROOF}}}.}\hspace{0.1cm} #1} {$\;\qed$\newline}
\newcommand{\beq}{\begin{eqnarray}}
\newcommand{\eeq}{\end{eqnarray}}
\newcommand{\ba}{\begin{align*}}
\newcommand{\ea}{\end{align*}}
\newcommand{\be}{\begin{equation}}
\newcommand{\ee}{\end{equation}}
\newcommand{\bl}{\begin{lemma}}
\newcommand{\el}{\end{lemma}}
\newcommand{\br}{\begin{remark}}
\newcommand{\er}{\end{remark}}
\newcommand{\bt}{\begin{theorem}}
\newcommand{\et}{\end{theorem}}
\newcommand{\bd}{\begin{definition}}
\newcommand{\ed}{\end{definition}}
\newcommand{\bind}{\begin{induction}}
\newcommand{\eind}{\end{induction}}
\newcommand{\bp}{\begin{proposition}}
\newcommand{\ep}{\end{proposition}}
\newcommand{\bc}{\begin{corollary}}
\newcommand{\ec}{\end{corollary}}
\newcommand{\bpr}{\begin{proof}}
\newcommand{\epr}{\end{proof}}
\newcommand{\bi}{\begin{itemize}}
\newcommand{\ei}{\end{itemize}}
\newcommand{\ben}{\begin{enumerate}}
\newcommand{\een}{\end{enumerate}}
\newcommand{\caC}{{\mathscr C}}
\newcommand{\caD}{{\EuScript D}}
\newcommand{\caI}{{\mathcal I}}
\newcommand{\caT}{{\mathcal T}}
\newcommand{\RTP}{\text{\normalfont RTP}}
\newcommand{\SEP}{\text{\normalfont SEP}}
\newcommand{\nn}{\nonumber}
\newcommand\pFq[6][8]{%
	\begingroup 
	\pFqmuskip=#1mu\relax
	\mathcode`\,=\string"8000
	\begingroup\lccode`\~=`\,
	\lowercase{\endgroup\let~}\pFqcomma
	{}_{#2}F_{#3}{\left[\genfrac..{0pt}{}{#4}{#5};#6\right]}%
	\endgroup
}
\newcommand{\pFqcomma}{\mskip\pFqmuskip}
\newtheorem{asu}{Assumption}
\newcommand{\basu}{\begin{asu}}
\newcommand{\easu}{\end{asu}}
\title{Ergodic theory of multi-layer interacting particle systems}
\author{Frank Redig\footnote{Delft Institute of Applied Mathematics, TU Delft, Delft, The Netherlands. f.h.j.redig@tudelft.nl}\ , Hidde van Wiechen\footnote{Delft Institute of Applied Mathematics, TU Delft, Delft, The Netherlands. h.vanwiechen@tudelft.nl} }
\date{\today}
\begin{document}

\maketitle
\begin{center}
    \section*{Abstract}
    \end{center}
    
\begin{changemargin}{10mm}{10mm}
We consider a class of multi-layer interacting particle systems and characterize the set of ergodic probability measures with finite moments. The main technical tool is duality combined with successful coupling.\\

{\bf Keywords:} Interacting particle systems, duality, multi-layer random walks, coupling.
\end{changemargin}

\section{Introduction}
In this paper we consider a class of multi-layer particle systems with duality properties.

The study of multi-layer random walks and interacting particle systems is motivated by the study of active particles, where the motion is
determined by an internal degree of freedom (determining the direction of motion) and a random component which models the influence of collisions with surrounding particles. Other terms for such random motions are e.g. persistent random walk and run-and-tumble motion.

When the internal degree of freedom takes a finite number of values and evolves autonomously as a finite state space continuous Markov chain, then
one can view the motion as a random walk on a multi-layer system, where the layers are indexed by the internal states.
There are various studies of the asymptotic properties of such random walks including law of large numbers (asymptotic speed), invariance principle (central limit behavior), and large deviations.  See e.g. \cite{maj, maj2, maj3, gros, gink} and references therein.

In this paper, we study a system of particles performing multi-layer random walks which possibly have interaction of
inclusion or exclusion type (see Section \ref{Models} below for a precise description of the models). We characterize the set of invariant probability measures with finite moments. More precisely we prove that
under an appropriate condition of moment growth, the only ergodic invariant probability measures are homogeneous product probability measures, indexed by the first moment (particle density). In the case of independent particles, these probability measures are product Poisson measures, in the case of interacting systems they are
products of binomial (exclusion) resp.\ negative binomial (inclusion) distributions.

The characterization of the invariant probability measures of multi-layer exclusion processes has been obtained recently in \cite{Amir}.
The study of the hydrodynamic limit of a system of active particles has been studied in \cite{erignoux}, and for a two-layer system 
with duality in \cite{simone}.

The important ingredient in our setting here is duality combined with the existence of a successful coupling for the dual process. 
This road is followed for the symmetric exclusion process in \cite{ligg} chapter 8.
Duality allows to characterize the invariant probability measures via the characterization of bounded harmonic functions of the dual process, which is
a countable state space Markov chain. If this Markov chain admits a successful coupling, then the bounded harmonic functions are constants, indexed by the number of dual particles.
The proof of the existence of a successful coupling is a combination of coupling the finite state space internal state process and the Ornstein coupling of random walks. These two ingredients are sufficient in the non-interacting case. In the interacting case, we use the approach of \cite{kuoch} which consists of ``spreading out'' the particles combined with Ornstein coupling of symmetric random walks.

The rest of our paper is organized as follows.  In Section \ref{2} we provide the general setup of  multi-layer particle systems, after which we define the three types of processes (exclusion, inclusion and independent walkers) we will study in this paper. Afterwards, we study the duality properties and invariant probability measures of these processes.

In Section \ref{3} we provide a characterization of the ergodic invariant probability measures in a slightly more general setting, where the only assumptions are duality with polynomial duality functions and the existence of a successful coupling. This unifies and generalizes earlier results from chapter 8 of the book of Liggett \cite{ligg} and Kuoch \cite{kuoch}.

Section \ref{4} is devoted to the proof of a successful coupling for the models under consideration. For independent particles this amounts to generalize the Ornstein coupling to the multi-layer setting. For interacting particles, it amounts to generalize the approach of Liggett for the exclusion process \cite{ligg} and Kuoch for the inclusion process \cite{kuoch}.

\section{Models and their duality properties}\label{2}
\subsection{Models: definitions}\label{Models}
In this paper we will look at models of configurations where the coordinates of individual particles are of the form $(x,\sigma)$, with $x \in \mathbb{Z}^d$ the \emph{position} of the particle and $\sigma \in S$ the \emph{internal state}, where $S$ is some finite set. We will denote the single particle state space as $V:=\mathbb{Z}^d\times S$, which we will think of as $|S|$ layers of $\mathbb{Z}^d$. For this reason, we will also refer to $\sigma \in S$ as the layer on which a particle at $(x,\sigma)$ resides.

We consider a configuration process $\{\eta(t):t \geq 0\}$ on a state space $\Om_s$  that will be defined later. 
The generator of the process is of the following type,
\begin{equation}\label{generator}
\mathscr{L}_sf(\eta)= \sum_{v,w\in V} p(v,w) \eta_{v}(\alpha + s\eta_{w}) \nabla_{v,w}f(\eta).
\end{equation}
Here $\eta_v$ is equal to the number of particles at site $v\in V$ and, if we denote $\eta^{v\to w}$ as the configuration $\eta$ where a single particle has moved from $v$ to $w$ (if possible), we have
\begin{equation*}
    \nabla_{v,w}f(\eta) = f(\eta^{v\to w}) - f(\eta).
\end{equation*}
 The value of $s\in\{-1,0,1\}$ in \eqref{generator} determines the type of the process we consider (exclusion, inclusion or independent particles). For this reason, the parameter $s$ also determines the state space $\Om_s$ we consider and the single particle transition rates $p(v,w)$ and constants $\alpha \in \mathbb{R}_+$ we allow. Below we will define the process for each possible value of $s$. 
 
 The main characteristic of our multi-layer particle systems is that the transition rates are determined by the layer on which a particle resides. Therefore, for every $\sigma \in S$ we consider a nearest neighbor symmetric random walk on $\mathbb{Z}^d$ with translation invariant transition rates. We denote by $\pi_\sigma(x)$ the corresponding rate to jump from $z$ to $z+x$. Note that  $\pi_\sigma(x) > 0$ if and only if $|x|=1$ and that $\pi_\sigma(x) = \pi_\sigma(-x)$. Furthermore, we let $\{c(\sigma,\sigma'): \sigma,\sigma' \in S\}$ be transition rates on the set of layers $S$ which we will assume to be symmetric and irreducible. Then we define the following processes:
 
 \begin{enumerate}
     \item \textbf{Symmetric exclusion process ($s=-1$).} Every site contains at most $\alpha \in \mathbb{N}$ particles and  jumps to sites where there are already many other particles are  less likely. 
     The state space of the multi-layer SEP is given by $\Om_{-1} = \{0,1,...,\alpha\}^V$,  and  the single particle transition rates   we will study for this model are of the following form,
 \begin{equation}\label{transition interacting}
     p\big((x,\sigma),(y,\sigma')\big) = \pi_\sigma(y-x)\delta_{\sigma,\sigma'} + c(\sigma,\sigma')\delta_{x,y},
 \end{equation}
 with $ \delta_\cdot$ the Kronecker delta.
     \item \textbf{Symmetric inclusion process ($s=1$).} In contrast to the exclusion process, this process actually encourages jumps to sites where other particles already reside. The state space of the multi-layer SIP is given by $\Om_1 = \mathbb{N}^V$, and the transition rates are again given by \eqref{transition interacting}. Furthermore, we allow for any $\alpha>0$.
     \item \textbf{Independent particles ($s=0$).}  For this paper, our model for independent particles will be  the run-and-tumble particle process (\RTP). A run-and-tumble particle is a particle with the following dynamics.
\begin{itemize}
    \item[] \emph{Random walk jumps.} With rate $\kappa$, a particle at $(x,\sigma)$ performs a nearest neighbor symmetric random walk jump on $\mathbb{Z}^d$ according to the transition rates $\pi_\sigma(\cdot)$, i.e., $(x,\sigma) \to (x + y,\sigma)$ with rate $\kappa \pi_{\sigma}(y)$.
    \item[] \emph{Active jumps.} With rate $\lambda$, a particle at $(x,\sigma)$ performs an active jump in the direction determined by the internal state $\sigma$, i.e., there exists a function $v:S \to \mathbb{Z}^d$ such that $(x,\sigma) \to (x + v(\sigma),\sigma)$ with rate $\lambda$.
    \item[] \emph{Internal state jumps.} A particle changes its internal state according to the transition rates  $\{c(\sigma, \sigma'):\sigma,\sigma'\in S\}$.
\end{itemize}
The state space of this process is $\Om_0 = \mathbb{N}^V$, and from the dynamics we conclude that the single particle transition rates are of the following form,
\begin{equation}\label{transition rtp}
    p\big((x,\sigma),(y,\sigma')\big) = \kappa \pi_{\sigma}(y-x)\delta_{\sigma,\sigma'} + \lambda \delta_{\sigma,\sigma'}\delta_{ y,x+v(\sigma)} + c(\sigma,\sigma')\delta_{x,y}.
\end{equation}
In this case, any choice of $\alpha>0$ is possible. However, without loss of generality we put $\alpha=1$. 
Furthermore,  in the special case where $\kappa=0$ we will assume that $\lambda>0$ and that the range of $v$ spans the whole of $\mathbb{Z}^d$, i.e.,
\begin{equation}\label{irreducible internal}
    \text{vct}\{\mathscr{R}(v)\} = \mathbb{Z}^d.
\end{equation}
This condition is crucial in order to construct the successful coupling cf. \eqref{pong} below.
 \end{enumerate}
 
 \begin{remark}
 Notice that for the interacting models we only allow for symmetric transitions on every layer. This is because for asymmetric transition rates we only have duality when the particles are independent. 
 \end{remark}

\subsection{Duality}
We will state and prove  duality results for the processes we just defined. Recall the following definition of duality of Markov processes.
\begin{definition}
Let $\{\eta(t):t\geq0\}$ and $\{\xi(t):t\geq0\}$ be two Markov processes on the state spaces $\Om$ and $\Om'$ respectively, and let $D:\Om'\times \Om \to \mathbb{R}$ be a measurable function. We say that $\{\eta(t):t\geq0\}$ and $\{\xi(t):t\geq0\}$ are \emph{dual} to one another, with respect to $D$, if
\begin{equation}\label{duality def}
    \E_\eta \left[D(\xi,\eta(t))\right] = \widehat{\E}_\xi \left[D(\xi(t),\eta)\right].
\end{equation}
Here $\E_\eta$ denotes the expectation in $\{\eta(t):t \geq 0\}$ starting from $\eta$,  $\hE_\xi$ the expectation in the dual process $\{\xi(t): t\geq 0\}$ starting from $\xi$, and we assume that both sides are bounded. We then call $D$ the \emph{duality function}.
\end{definition}
\subsubsection{Duality results}\label{dual processes}
Let $|\xi| := \sum_x \xi_x$ denote the number of particles in $\xi$ and let $\Om_{s,f}:=\{\xi \in \Om : |\xi|<\infty \}$ be the subspace of $\Om_s$  consisting of only those configurations with a finite number of particles. In the following theorem we will give duality results of the processes defined in section \ref{Models} with duality functions $D_s:\Om_{s,f} \times \Om_s \to \mathbb{R}$ of the following form:
\begin{equation}\label{duality function ds}
    D_s(\xi, \eta) = \prod_{v\in V} d_s(\xi_v, \eta_v). 
\end{equation}
The proof of the  first two statements  can be found in for example \cite[Theorem 4.1]{Giar}. For the third statement, we will make use of another duality result, with the so-called \emph{associated deterministic system}, that is introduced in \cite{carinci}. 
\begin{theorem}\label{duality}
\begin{enumerate}
\item If $s=-1$ then the process generated by $\mathscr{L}_{-1}$ is self-dual with duality function
\begin{align*}
    D_{-1}(\xi,\eta) &= \prod_{v \in V} \frac{\eta_v!}{(\eta_v-\xi_v)!} \cdot \frac{(\alpha - \xi_v)!}{\alpha!}\cdot  I(\xi_v \leq \eta_v),
\end{align*}
where $I(\cdot)$ denotes the characteristic function.
\item If $s=1$, then the process generated by $\mathscr{L}_1$ is self-dual with duality function
\begin{align*}
    D_1(\xi,\eta)&=\prod_{v \in V} \frac{\eta_v!}{(\eta_v-\xi_v)!} \cdot\frac{\Gamma(\alpha)}{\Gamma(\alpha +\xi_v)}\cdot  I(\xi_v \leq \eta_v).
\end{align*}
\item \label{duality 3} If $s=0$ then the process generated by $\mathscr{L}_0$, with transition rates $p(v,w)$ given by \eqref{transition rtp}, is dual to its time-reversed process, i.e., the RTP process with single particle transition rates
\be
\widehat{p}\big((x,\sigma),(y,\sigma')\big) = \kappa \pi_{\sigma}(y-x)\delta_{\sigma,\sigma'} + \lambda \delta_{\sigma,\sigma'}\delta_{ y,x-v(\sigma)} + c(\sigma,\sigma')\delta_{x=y},
\ee
and with the same parameter $\alpha>0$. We denote this as the $\widehat{\text{RTP}}$ process. The corresponding duality function is given by 
\begin{equation*}
D_0(\xi,\eta) = \prod_{v \in V} \frac{\eta_v!}{(\eta_v-\xi_v)!}\cdot  I(\xi_v \leq \eta_v).
\end{equation*}
\end{enumerate}
\end{theorem}
\begin{remark}\label{single particle duality}
If $\xi=\delta_v$ is the configuration containing a single particle at $v\in V$ and no particles elsewhere, then $D(\xi,\eta) = c_{\alpha, s}\eta_v$, with $c_{\alpha,s}$ a positive constant depending on the model ($s\in \{-1,0,1\}$) and the constant $\alpha$. As a consequence we have that 
\begin{equation}\label{bongo}
\begin{split}
\E_\eta[\eta_v(t)] = \frac{1}{c_{\alpha,s}} \E_\eta [D(\delta_v, \eta(t))]= \frac{1}{c_{\alpha,s}} \widehat{\E}_v[D(\delta_{v(t)},\eta)] =\widehat{\E}_v[\eta_{v(t)}],
\end{split}
\end{equation}
where $\widehat{\E}_v$ denotes the expectation of the dual process starting from $\delta_v$.
\end{remark}

\subsubsection{Proof of duality for the RTP process}
Let $\{v(t):t\geq0\}$ be the random path of a single particle in $V$ performing the RTP dynamics starting from $v(0) = v$. The deterministic system we will consider is the following: for a function $f:V \to \mathbb{R}$, define 
\begin{equation*}
    f_t(v) := \sum_{w\in V} p_t(v,w) f(w) = \E\left[ f\big(v(t)\big)\right],
\end{equation*} where $p_t(v,w)$ is the transition kernel of a single RTP particle. In other words, the process $\{f_t : t\geq 0\}$ follows the Kolmogorov backwards equation of the RTP process. We now have the following duality result:
\begin{proposition} Let $f:V\to\mathbb{R}$ be such that $f(v) \neq 1$ for only a finite number of $v\in V$.
For the deterministic processes  $\{f_t : t\geq 0\}$ and the process $\{\eta(t):t\geq0\}$ generated by $\mathscr{L}_0$, it holds that
\begin{equation}\label{deterministic duality}
     \E_\eta\left[ \prod_{v \in V} f(v)^{\eta_v(t)}\right] =\prod_{v\in V} f_t(v)^{\eta_v},
\end{equation}
i.e., the two processes are dual to one another with duality function 
\begin{equation*}
    \mathscr{D}(f,\eta) = \prod_{v \in V} f(v)^{\eta_v}.
\end{equation*}
\end{proposition}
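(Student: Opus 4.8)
The plan is to verify the duality relation \eqref{deterministic duality} by a generator computation, exploiting the product form of the duality function $\mathscr{D}(f,\eta) = \prod_{v} f(v)^{\eta_v}$. Since the right-hand side of \eqref{deterministic duality} is a deterministic function of $t$ solving a linear ODE (through the Kolmogorov backward equation $\partial_t f_t(v) = \sum_w \widehat{L} f_t(v)$, where $\widehat L$ is the single-particle generator), it suffices to show that for every fixed $f$ the function $t \mapsto \E_\eta\big[\prod_v f(v)^{\eta_v(t)}\big]$ satisfies the same linear evolution equation with the same initial condition, and then invoke uniqueness. Concretely, I would first reduce to showing the infinitesimal identity
\[
\mathscr{L}_0 \,\mathscr{D}(f,\cdot)(\eta) \;=\; \sum_{v\in V} \Big(\widehat{L} f\Big)(v)\,\frac{\partial}{\partial f(v)}\,\mathscr{D}(f,\eta),
\]
i.e.\ applying the configuration-process generator in the $\eta$-variable equals applying the single-particle backward generator termwise in the $f$-variable; this is exactly the defining property of the ``associated deterministic system'' of \cite{carinci}.

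The key computation is the action of $\mathscr{L}_0$ on $\mathscr{D}(f,\eta)$. For a single jump $v\to w$ one has
\[
\nabla_{v,w}\mathscr{D}(f,\eta) \;=\; \prod_{u} f(u)^{\eta_u}\cdot\Big(\tfrac{f(w)}{f(v)} - 1\Big)\;=\;\mathscr{D}(f,\eta)\,\frac{f(w)-f(v)}{f(v)},
\]
valid whenever $\eta_v\ge 1$; when $\eta_v = 0$ the rate $\eta_v(\alpha+s\eta_w)$ in \eqref{generator} (with $s=0$) vanishes, so the factor $\eta_v/f(v)$ is harmless and we may write $\eta_v f(v)^{\eta_v-1} = \eta_v f(v)^{\eta_v}/f(v)$ without division issues. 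Summing over all $v,w$ with the rates $p(v,w)$ from \eqref{transition rtp} gives
\[
\mathscr{L}_0\mathscr{D}(f,\eta) \;=\; \prod_{u\neq v} f(u)^{\eta_u}\;\sum_{v\in V}\eta_v\, f(v)^{\eta_v-1}\sum_{w\in V} p(v,w)\big(f(w)-f(v)\big)\;=\;\sum_{v\in V}\eta_v\,f(v)^{\eta_v-1}\,(L f)(v)\prod_{u\neq v} f(u)^{\eta_u},
\]
where $(Lf)(v) = \sum_w p(v,w)(f(w)-f(v))$ is the \emph{forward} single-RTP generator. Since $\eta_v f(v)^{\eta_v - 1}\prod_{u\neq v}f(u)^{\eta_u} = \frac{\partial}{\partial f(v)}\mathscr{D}(f,\eta)$, this is precisely the claimed termwise identity, now with $\widehat L$ replaced by $L$; the time-reversal bookkeeping is then just the observation that $f_t(v) = \E[f(v(t))]$ evolves under $\partial_t = L$ read in the starting point, which is the backward equation, and this is what makes the $\widehat{\text{RTP}}$ (i.e.\ $v\mapsto -v(\sigma)$) appear when one later re-expresses things via the dual particle process rather than the deterministic system.

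Having the infinitesimal identity, I would finish by a standard semigroup argument: both sides of \eqref{deterministic duality}, as functions of $t$, have derivative given by applying the respective generators, agree at $t=0$, and — because $0\le f\le 1$ can be assumed (the interesting test functions are $f = I_{A}$ or convex combinations, and $\mathscr{D}$ is bounded by $1$ there) — the relevant Markov semigroup is a contraction, so Gronwall/uniqueness for the linear equation yields equality for all $t\ge 0$. The main obstacle is purely technical rather than conceptual: $\Om_0 = \mathbb{N}^V$ is non-compact with infinitely many particles possible, so $\mathscr{D}(f,\eta)$ with $0\le f\le 1$ is bounded but the generator sum $\mathscr{L}_0\mathscr{D}(f,\cdot)$ has infinitely many terms, and one must justify interchanging the generator with the infinite sum/product and check that $\mathscr{D}(f,\cdot)$ lies in a suitable domain (or core) of $\mathscr{L}_0$ — this is where one either restricts first to finitely many particles (legitimate, since in \eqref{deterministic duality} nothing forces $\eta$ finite, but one can approximate) or cites the construction of the process and the fact that exponential/product functions form a measure-determining, generator-friendly class for these systems, as in \cite{carinci}.
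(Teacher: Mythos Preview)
Your generator computation is correct, and the argument you outline (infinitesimal identity plus semigroup uniqueness) would indeed prove the statement, modulo the domain/core issues you flag at the end. But the paper takes a much shorter and more elementary route that bypasses all of those technicalities.

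The key observation you are not exploiting is that for $s=0$ the particles are genuinely \emph{independent}. The paper simply labels the individual particles of $\eta(t)$ as $\{v_i(t): i\in I\}$, rewrites
\[
\prod_{v\in V} f(v)^{\eta_v(t)} \;=\; \prod_{i\in I} f\big(v_i(t)\big),
\]
and then factors the expectation using independence:
\[
\E\Big[\prod_i f(v_i(t))\Big] \;=\; \prod_i \E\big[f(v_i(t))\big] \;=\; \prod_i f_t(v_i) \;=\; \prod_{v\in V} f_t(v)^{\eta_v}.
\]
That is the entire proof. No generator acting on infinite products, no ODE uniqueness, no core argument.

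What each approach buys: your method is the generic duality-via-generators machinery and would be the right tool for the interacting cases $s=\pm 1$, where independence fails and one cannot simply factor the expectation. For $s=0$, however, the paper's direct computation is strictly simpler and avoids exactly the ``main obstacle'' you describe (infinitely many particles, domain of $\mathscr{L}_0$, justifying interchanges). So your proof is valid but needlessly heavy for this particular proposition; the independence of particles is the one-line shortcut.
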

 The proof of this result is straightforward and only relies on the fact that the particles in the RTP process move independently.

\begin{proof}
Define $\{v_i(t) : i \in I, t\geq 0\}$ as the paths of the particles in the configuration $\eta(t)$ with $I$ an arbitrary set of labels, i.e., $\eta_v(t) = \sum_{i \in I}I(v_i(t) = v)$ for all $v \in V$ and $t\geq 0$. We then have that 
\begin{equation*}
    \E\left[ \prod_{v \in V} f(v)^{\eta_v(t)}\right] = \E\left[ \prod_{i} f(v_i(t))\right] =\prod_{i} \E\left[f(v_i(t))\right] = \prod_i f_t(v_i) = \prod_{v\in V} f_t(v)^{\eta_v},
\end{equation*}
where in the second step we used the independence of particles.
\end{proof}

We will now prove item \ref{duality 3} of Theorem \ref{duality} using the duality result in \eqref{deterministic duality}.

\begin{proof}
Let $\{\xi(t):t\geq0\}$ denote an $\widehat{\text{RTP}}$ process and, for a given $\eta$, define the sequence of finite configurations $(\eta^{(N)})_{N\in\mathbb{N}}$ as 
\begin{equation*}
    \eta^{(N)}_{(x,\sigma)} := \begin{cases}
    \eta_{(x,\sigma)}\ \ \ \ \ &\text{if $x \in [-N,N]$,}\\
    0&\text{else.}
    \end{cases}
\end{equation*} We will first prove the result for the dual process starting from $n$ particles at position $v_i \in V$, i.e., $\xi = n\cdot \delta_{v_i}$, with $\delta_{v_i}$ the configuration with a single particle at $v_i$, and by replacing the starting configuration $\eta$ by $\eta^{(N)}$. By taking the $n$-th order derivative with respect to $f(v_i)$ on the left-hand side of \eqref{deterministic duality} and afterwards setting $f\equiv 1$, we find that 
\begin{align}\label{dual n1}
\nn\left.\frac{\partial^n}{\partial f(v_i)^n}\E_{\eta^{(N)}} \left[ \prod_{v \in V} f(v)^{\eta^{(N)}_v(t)}\right]\right|_{f\equiv 1} &= \left.\E_{\eta^{(N)}}\left[\frac{\partial^n}{\partial f(v_i)^n}  \prod_{v \in V} f(v)^{\eta^{(N)}_v(t)}\right]\right|_{f\equiv 1}\\
&= \E_{\eta^{(N)}} \left[\frac{\eta_{v_i}(t)!}{(\eta_{v_i}(t)-n)!}\cdot  I\big(n \leq \eta_{v_i}(t)\big)\right].
\end{align}
Here we  were able to interchange the derivatives and the expectation using dominated convergence. Note that the right-hand side is equal to  $\E_{\eta^{(N)}}\left[D_0(\xi, \eta^{(N)}(t))\right]$.

Applying the same operations as in \eqref{dual n1} to the right hand side of \eqref{deterministic duality}, we obtain
\begin{align*}
&\left. \frac{\partial^n}{\partial f(v_i)^n} \prod_{v \in V}\left(\sum_{w \in V} p_t(v,w) f(w)\right)^{\eta^{(N)}_v} \right|_{f\equiv 1}\\
&\ \ \ =\left.\sum_{m=1}^n \sum_{\substack{v^{(1)}, ..., v^{(m)} \in V \\ v^{(i)}\neq v^{(j)}}} \sum_{k_1 + ... + k_m = n} \binom{n}{k_1, ..., k_m}\prod_{j=1}^m \frac{\partial^{k_j}}{\partial f(v_i)^{k_j}} \left(\sum_{w \in V} p_t(v^{(j)},w)f(w)\right)^{\eta^{(N)}_{v^{(j)}}}\right|_{f\equiv1}\\
&\ \ \ =\sum_{m=1}^n \sum_{\substack{v^{(1)}, ..., v^{(m)} \in V \\ v^{(i)}\neq v^{(j)}}} \sum_{k_1 + ... + k_m = n} \binom{n}{k_1, ..., k_m}\prod_{j=1}^m d_0\left(\eta^{(N)}_{v^{(j)}}, k_j\right)p_t\left(v^{(j)}, v_i\right)^{k_j} \\
&\ \ \ =\sum_{m=1}^n \sum_{\substack{v^{(1)}, ..., v^{(m)} \in V \\ v^{(i)}\neq v^{(j)}}} \sum_{k_1 + ... + k_m = n} \binom{n}{k_1, ..., k_m}\prod_{j=1}^m d_0\left(\eta^{(N)}_{v^{(j)}}, k_j\right)\widehat{p}_t\left(v_i,v^{(j)}\right)^{k_j}.
\end{align*}
Here $\widehat{p}_t(w,v)$ is the transition kernel of a single $\widehat{\text{RTP}}$ particle, and we have used that $p_t(v,w) = \widehat{p}_t(w,v)$ for all $v,w \in V$. Notice that the last line in the above formula is the expected value of $D_0(\xi(t),\eta^{(N)})$, i.e.,
\begin{equation}\label{dual n2}
\left. \frac{\partial^n}{\partial f(v_i)^n} \prod_{v \in V}\left(\sum_{w \in V} p_t(v,w) f(w)\right)^{\eta^{(N)}_v} \right|_{f\equiv 1} = \widehat{\E}_\xi \big[D_0(\xi(t),\eta^{(N)})\big].
\end{equation}
Combining \eqref{deterministic duality}, \eqref{dual n1} and \eqref{dual n2}, we find that 
$$\E_{\eta^{(N)}}\big[D_0(\xi,\eta^{(N)}(t))\big] = \widehat{\E}_\xi\big[D_0(\xi(t),\eta^{(N)})\big].$$
The claim now follows from monotone convergence as $N\to\infty$. 

If we consider  any finite configuration of particles $\xi \in \Om_{0,f}$, i.e., $\xi = \sum_{i=1}^n \delta_{v_i}$ for some $n \in \mathbb{N}$ and $v_i \in V$, then the duality result can be found by taking the derivative with respect to each $f(v_i)$ on both the left-hand side and right-hand side of the equation \eqref{deterministic duality}. 
\end{proof}

\subsection{Invariant probability measures}
\begin{proposition}\label{invariant}
For the processes defined in section \ref{Models}, the following probability measures denoted by $\mu_{\rho,s}$ are invariant.
\begin{enumerate}
    \item If $s=-1$, then $\mu_{\rho,-1}$ with $\rho \in [0,1]$ is distributed according to a product Binomial distribution, i.e., 
    $$\mu_{\rho,-1}(\eta) = 
        \prod_{v \in V} \binom{\alpha}{\eta_v}\rho^{\eta_v}(1-\rho)^{\alpha - \eta_v}.$$
    \item If $s=1$, then $\mu_{\rho, 1}$ with $\rho\in [0,1)$ is distributed according to a product Negative Binomial distribution, i.e.,
    $$ \mu_{\rho,1}(\eta) 
        =\prod_{v \in V} \frac{\Gamma\big(\alpha+\eta_v\big)}{\Gamma(\alpha)\cdot \eta_v!}\rho^{\eta_v}(1-\rho)^{\alpha}.$$
    \item If $s=0$, then $\mu_{\rho,0}$ with $\rho\geq0$ is distributed according to a product Poisson distribution, i.e.,
    $$\mu_{\rho,0}(\eta) = \prod_{v \in V} \cfrac{\rho^{\eta_v}}{\eta_v!}e^{-\rho}.$$
\end{enumerate}
\end{proposition}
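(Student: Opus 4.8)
The natural strategy is to verify invariance through duality rather than by direct computation with the generator. For each $s$, the candidate measure $\mu_{\rho,s}$ is a product measure whose one-site marginal is, respectively, Binomial$(\alpha,\rho)$, Negative Binomial$(\alpha,\rho)$, or Poisson$(\rho)$. The key observation is that for each of these families, the expectation of the single-site duality polynomial $d_s(\xi_v,\cdot)$ under the marginal is an explicit power of $\rho$: namely $\E_{\mu_{\rho,-1}}[d_{-1}(k,\eta_v)] = \rho^k$, $\E_{\mu_{\rho,1}}[d_1(k,\eta_v)] = \rho^k$, and $\E_{\mu_{\rho,0}}[d_0(k,\eta_v)] = \rho^k$. (These are the classical moment identities: $\E[\eta_v!/((\eta_v-k)!) \cdot (\alpha-k)!/\alpha!]$ for a Binomial equals $\rho^k$, the falling-factorial moment of a Poisson is $\rho^k$, and similarly for the negative Binomial with the Gamma-ratio normalization.) Hence, by independence across sites, $\E_{\mu_{\rho,s}}[D_s(\xi,\eta)] = \rho^{|\xi|}$ for every finite configuration $\xi$, where $|\xi| = \sum_v \xi_v$ is the number of dual particles.

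First I would record these single-site moment identities, with a one-line justification for each (generating-function computation or known formula). Next, I would invoke the self-duality from Theorem \ref{duality} (for $s=\pm1$) and the duality with the $\widehat{\RTP}$ process (for $s=0$): for any finite $\xi$,
\[
\E_{\mu_{\rho,s}}\big[D_s(\xi,\eta(t))\big] = \int \widehat{\E}_\xi\big[D_s(\xi(t),\eta)\big]\, \mu_{\rho,s}(d\eta) = \widehat{\E}_\xi\Big[\E_{\mu_{\rho,s}}\big[D_s(\xi(t),\eta)\big]\Big] = \widehat{\E}_\xi\big[\rho^{|\xi(t)|}\big].
\]
Since the dual dynamics (exclusion, inclusion, or $\widehat{\RTP}$) conserves the number of particles, $|\xi(t)| = |\xi|$ almost surely, so the right-hand side equals $\rho^{|\xi|} = \E_{\mu_{\rho,s}}[D_s(\xi,\eta)]$. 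Thus $\E_{\mu_{\rho,s}}[D_s(\xi,\eta(t))]$ is independent of $t$ for all finite $\xi$.

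Finally I would argue that this suffices for invariance: the functions $\eta \mapsto D_s(\xi,\eta)$, as $\xi$ ranges over $\Om_{s,f}$, are measure-determining on $\Om_s$ (they are products of polynomials in the $\eta_v$ that separate the relevant moments; equivalently they span a dense subalgebra of the relevant function space, and on $\{0,\dots,\alpha\}^V$ for $s=-1$ they literally span all local functions). Hence if $\mu_{\rho,s} S(t)$ denotes the time-evolved measure, $\E_{\mu_{\rho,s}S(t)}[D_s(\xi,\cdot)] = \E_{\mu_{\rho,s}}[D_s(\xi,\cdot)]$ for all $\xi$ forces $\mu_{\rho,s} S(t) = \mu_{\rho,s}$, i.e. $\mu_{\rho,s}$ is invariant.

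The main obstacle is not the algebra of the moment identities (these are standard) but the measure-determining / density argument in the last step, together with the attendant integrability and Fubini justifications: one must ensure $D_s$ is $\mu_{\rho,s}$-integrable (immediate for $s=-1$ since $D_{-1}$ is bounded; for $s=0,1$ it requires the finiteness of factorial moments of Poisson and negative Binomial, which holds), and that duality may be integrated against $\mu_{\rho,s}$ and Fubini applied. For the exclusion case this is clean because the state space is compact and the duality functions are bounded and span all local functions; for inclusion and RTP one should note that the product measures have all moments finite, so the polynomial duality functions, although unbounded, are integrable and still determine the measure among measures with finite moments — which is exactly the class in which the paper works.
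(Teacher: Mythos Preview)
Your proposal is correct and follows essentially the same route as the paper. The paper's argument (given in the sentence preceding Proposition~\ref{char} together with that proposition) is precisely: (i) verify by direct calculation that $\int D_s(\xi,\eta)\,d\mu_{\rho,s}(\eta)$ depends only on $|\xi|$ (indeed equals $\rho^{|\xi|}$), (ii) observe that invariance then follows from duality plus conservation of particles in the dual, and (iii) use that the duality polynomials determine the measure (the moment-problem remark in the proof of Proposition~\ref{char}) to conclude. Your write-up is somewhat more explicit about the Fubini and integrability checks, which the paper leaves implicit, but the structure is the same.
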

\begin{proof}
The first two results are well-known and follow from the fact that the probability measures satisfy the detailed balance condition (see e.g. \cite{gibert}). 
For the third result,  a system of independent walkers on $V$ with single particle transition rates $p(v,w)$,  such that for all $w \in V$ 
\begin{equation*}
    \sum_{v\in V} \left( \rho(v)p(v,w) - \rho(w)p(w,v)\right) =0
\end{equation*}
has invariant product Poisson measures $\bigotimes_{v \in V} \text{Pois}(\rho(v))$  (see e.g. \cite{Derman}). Note that in our case we have  for all $(y,\sigma')\in V$,
\begin{equation}
    \sum_{(x,\sigma) \in V} p((x,\sigma),(y,\sigma')) = \sum_{(x,\sigma) \in V} p((y,\sigma'),(x,\sigma)) = \sum_{u\in\mathbb{Z}^d} \pi_{\sigma'}(u) + \lambda + \sum_{\sigma \in S} c(\sigma,\sigma').
\end{equation}
Therefore, the product Poisson measures with constant density $\rho>0$ are invariant.
\end{proof}

The following proposition provides the relation between the probability measures of Proposition \ref{invariant} and the duality functions of Theorem \ref{duality}.
\begin{proposition}\label{char}
Let $\mu \in \mathscr{P}(\Om_s)$, then $\mu = \mu_{\rho,s}$ if and only if for every $\xi \in \Om_{s,f}$ and every $v\in V$,
\begin{equation}\label{char eq}
    \int D_s(\xi,\eta)\dd\mu(\eta) = \left(\int D_s(\delta_v,\eta)\dd\mu(\eta)\right)^{|\xi|}.
\end{equation}
\end{proposition}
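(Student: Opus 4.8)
The plan is to prove the two implications separately, both resting on a single-site computation. Note first that $d_s(k,n) = c_s(k)\,n(n-1)\cdots(n-k+1)$, where $c_0(k)=1$, $c_{-1}(k)=\big(\alpha(\alpha-1)\cdots(\alpha-k+1)\big)^{-1}$ and $c_1(k)=\big(\alpha(\alpha+1)\cdots(\alpha+k-1)\big)^{-1}$, the constraint $I(\xi_v\le\eta_v)$ being automatically encoded by the falling factorial. The standard descending-factorial-moment identities for the binomial, negative binomial and Poisson laws then give, for each $s$,
\[
\int d_s(k,\eta_v)\,d\mu_{\rho,s}(\eta) = \theta_s(\rho)^k, \qquad \theta_{-1}(\rho)=\theta_0(\rho)=\rho,\quad \theta_1(\rho)=\tfrac{\rho}{1-\rho},
\]
which I would isolate as a short lemma (it is a one-line computation with probability generating functions).

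For the implication ``$\mu=\mu_{\rho,s}$ $\Rightarrow$ \eqref{char eq}'', enumerate the occupied sites $w_1,\dots,w_m$ of $\xi$, so that $D_s(\xi,\eta)=\prod_{j=1}^m d_s(\xi_{w_j},\eta_{w_j})$. Since $\mu_{\rho,s}$ is a homogeneous product measure the integral factorizes over the $w_j$, and the single-site lemma turns it into $\prod_{j=1}^m\theta_s(\rho)^{\xi_{w_j}}=\theta_s(\rho)^{|\xi|}$. Applying this with $\xi=\delta_v$ gives $\int D_s(\delta_v,\eta)\,d\mu_{\rho,s}=\theta_s(\rho)$ independently of $v$, and \eqref{char eq} follows at once.

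For the converse, assume \eqref{char eq}. Taking $\xi=\delta_w$ shows that $\theta:=\int D_s(\delta_v,\eta)\,d\mu$ does not depend on $v$; since $\theta = c_s(1)\,\E_\mu[\eta_v]$ with $\E_\mu[\eta_v]$ ranging over $[0,\alpha]$ when $s=-1$ and over $[0,\infty)$ when $s\in\{0,1\}$, there is a unique $\rho$ in the admissible interval with $\theta_s(\rho)=\theta$. Taking now $\xi=\sum_{j=1}^m k_j\delta_{w_j}$ with distinct $w_j$, \eqref{char eq} reads $\int\prod_{j=1}^m d_s(k_j,\eta_{w_j})\,d\mu = \theta^{k_1+\cdots+k_m} = \prod_{j=1}^m\int d_s(k_j,\eta_{w_j})\,d\mu$. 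Since $d_s(k,\cdot)$ is a nonzero scalar multiple of the $k$-th falling factorial and the falling factorials form a linear basis of the polynomials, this is equivalent to the statement that all mixed moments $\int\prod_j\eta_{w_j}^{m_j}\,d\mu$ factorize over the sites and coincide with those of $\mu_{\rho,s}$. Finally, every finite-dimensional marginal of $\mu_{\rho,s}$ is a finite product of binomial/negative binomial/Poisson laws, hence has a moment generating function finite in a neighbourhood of the origin and is therefore determined by its moments; so $\mu$ and $\mu_{\rho,s}$ have the same finite-dimensional distributions, i.e.\ $\mu=\mu_{\rho,s}$.

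The step that needs the most care is this last one: upgrading equality of all mixed moments to equality of measures on the infinite product space $\Om_s$. For $s=-1$ it is automatic, since the marginals are supported on the finite set $\{0,\dots,\alpha\}$, on which cylinder functions are polynomials, so the moments determine $\mu$ directly; for $s=0$ and $s=1$ one genuinely invokes moment-determinacy, which is where the exponential moments of the Poisson and negative binomial laws enter. One should also note up front that \eqref{char eq} is only meaningful when the integrals are finite, so $\mu$ is tacitly assumed to have a finite first moment; as $D_s\ge 0$, finiteness of $\int D_s(\delta_v,\eta)\,d\mu$ then propagates to $\int D_s(\xi,\eta)\,d\mu$ for every finite $\xi$, which is exactly finiteness of all mixed moments.
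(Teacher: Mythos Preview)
Your proof is correct and follows the same approach as the paper's: verify \eqref{char eq} for $\mu_{\rho,s}$ by a direct factorial-moment computation, and for the converse recognize that \eqref{char eq} determines all mixed moments and appeal to moment-determinacy of the binomial/Poisson/negative binomial marginals. The paper's own argument is a two-sentence sketch of exactly this (``straightforward calculation'' plus ``a moment problem which has a unique solution''); your version simply fills in the details the paper leaves implicit.
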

\begin{proof}
A straightforward calculation shows that \eqref{char eq} holds for $\mu_{\rho,s}$. The uniqueness property follows from the fact that $D_s(\xi, \eta)$ is a (multivariate) polynomial of order at most $|\xi|$. This implies that  \eqref{char eq} is actually a moment problem, which in the case of $\mu_{\rho,s}$ has a unique solution since the marginals have a finite  moment generating function (see e.g. \cite{kleiber}).
\end{proof}

 From this relation, the invariance of the probability measures also follows from the conservation of particles in the dual process. Namely, by duality and Fubini we have that 
 \begin{equation*}\begin{split}
     \int \E_{\eta}\left[D_s(\xi,\eta_t)\right]d\mu_{\rho,s}(\eta) 
     = \E_\xi \left[ \int D_s(\xi_t,\eta)d\mu_{\rho,s}(\eta)\right]
     = \left(\int D_s(\delta_v,\eta)\dd\mu_{\rho,s}(\eta)\right)^{|\xi|}.
 \end{split}\end{equation*}

\section{Ergodic theory of particle systems with homogeneous factorized duality polynomials}\label{3}

In this section we provide a characterization of the ergodic invariant probability measures satisfying a certain moment growth condition in a general
setting where we assume the existence of homogenous factorized duality polynomials, and the existence of successful coupling for the dual process.
This generalizes earlier results from \cite{ligg} chapter 8 for the symmetric exclusion process, and  \cite{kuoch} for the inclusion process. The characterization will be applied in Section \ref{4} to our models (see Theorem \ref{main theorem} below).

\subsection{Basic assumptions}
\subsubsection{Configurations}
We consider a configuration process $\{\eta(t):t \geq 0\}$ on (a subset of) the state space $\Omega=\mathbb{N}^G$, where
$G$ is assumed to be an infinite countable set. We denote by $S(t)$ the semigroup of this process, i.e.,
$S(t) f(\eta)= \E_\eta f(\eta(t))$.
We further denote by $\Omega_f$ the set of finite configurations, i.e., elements of
$\xi\in\Omega$ such that $|\xi|=\sum_x\xi_x<\infty$.

\subsubsection{Factorized duality functions}
We assume that there exists a duality function
\be\label{dualityf}
D:\Omega_f\times \Omega\to\R_+
\ee
such that
we have the duality relation
\be\label{dualityrel}
\E_\eta D(\xi, \eta(t))=\hE_\xi D(\xi(t), \eta).
\ee
We assume that $D(\emptyset, \eta)=1$ where $\emptyset$ denotes the empty configuration.
Moreover we assume that the duality functions are in homogeneous factorized form, i.e., of the form
\be\label{facto}
D(\xi, \eta)= \prod_{x\in G} d(\xi_x, \eta_x),
\ee
where $d(0,n)=1$, i.e., in the product only a finite number of factors is different from one, and where $d(k, \cdot)$ is a non-negative polynomial of
degree $k$. Moreover, we assume that every polynomial $p(n)$ of degree $k$ can be expressed as a linear combination of the polynomials
$d(r,n)$ with $0\leq r\leq k$.

We assume that both in the process $\{\eta(t):t \geq 0\}$ and in the dual process $\{\xi(t):t\geq 0\}$ the number of
particles is conserved.

In the examples of this paper, the duality functions are multivariate polynomials of degree $|\xi|$, and the dual process
$\{\xi(t): t\geq 0\}$ is either the same process (for the interacting examples) or the process obtained by reverting the velocities (the $\widehat{\text{RTP}}$ process defined in Theorem \ref{duality}).
In this section we take an abstract point of view and prove under general assumptions a structure theorem for the set of (tempered) invariant probability measures.

\subsubsection{Tempered probability measures}

Given a duality function, we define the $D$-transform of a probability measure $\mu$ on the configuration space $\Omega$
by
\be
\nonumber \widehat{\mu}(\xi)= \int D(\xi, \eta) \dd\mu (\eta),
\ee
where we implicitly assume that for all $\xi\in \Omega_f$, $D(\xi, \cdot)$ is $\mu$-integrable.
\bd\label{tempdef}
We then say that a probability measure $\mu$ is tempered if
\ben 
\item $\mu$ satisfies a uniform moment condition, i.e., for all $n\in\mathbb{N}$
\be\label{temp}
c_n:=\sup_{|\xi|\leq n} \int D(\xi, \eta) \dd\mu(\eta)<\infty.
\ee
\item $\mu$ is determined by  its $D$-transform, i.e., $\widehat{\mu}=\widehat{\nu}$ if and only if
$\mu=\nu$.
\item The following space
\[
\caD=\text{vct} \{ D(\xi, \cdot): \xi\in \Omega_f\},
\]
i.e., the vector space spanned by the functions $D(\xi,\cdot)$, is dense in $L^2(\mu)$.
\een
\ed
Notice that by the assumptions on the duality functions, the condition \eqref{temp} can be expressed equivalently by the requirement
that all moments of the occupation variables are finite uniformly in $x$, i.e., for all $n\in \mathbb{N}$
\[
\sup_{x\in G}\int \eta_x^n d\mu(\eta) <\infty.
\]
Using  H\"older's inequality,  we then also obtain that under \eqref{temp} we have that for all $n\in\N$
\be\label{cscond}
\sup_{\xi, \xi': |\xi|\leq n,  |\xi'|\leq n} \int D(\xi, \eta) D(\xi', \eta) \dd\mu(\eta)<\infty.
\ee

The condition that the $D$-transform determines the probability measure uniquely is implied by a growth condition
on $c_n$ which implies that the measure $\mu$ is uniquely determined by its multivariate moments. Examples of sufficient growth conditions  can be found in e.g. \cite[Section 3.2]{kleiber}.
In these settings, the condition
that $\caD$ is dense in $L^2(\mu)$ is also natural.
In the setting of processes of exclusion type, i.e., when there are at most $\alpha$ particles at each site, the condition of density of $\caD$ is natural and
follows from the Stone Weierstrass theorem, i.e., $\caD$ is uniformly dense in the set of continuous functions $\caC(\Omega)$.
\subsubsection{Assumptions on the dual process}\label{assumptions}
For the dual process $\{\xi(t):t\geq 0\}$, we assume that it is irreducible  on the sets
$\Omega_n=\{\xi: |\xi|=n\}$.
Moreover we assume that eventually the process $\{\xi(t):t\geq 0\}$ started at $\xi$ with $|\xi|=n$,  spreads out over the infinite set $\Omega_n$. This is
expressed via the condition
that for all $\xi'\in \Omega_f$
\be\label{spreadout}
\lim_{t\to\infty} \hP_{\xi} (\xi(t)\perp \xi')=1,
\ee
where we denote $\xi\perp \xi'$ the event that the supports of $\xi$ and $\xi'$ are disjoint.
In words, \eqref{spreadout} means that the probability that the configuration at time $t$ has non-zero occupation at
fixed sites tends to zero as $t\to\infty$.

\subsubsection{Ergodic probability measures}
We denote by $\caI$ the set of invariant probability measures of the process $\{\eta(t): t\geq 0\}$ and
by $\caT$ the set of tempered probability measures on $\Omega$.
Both $\caI$ and $\caT$ are convex sets.

We are then interested in characterizing
the ergodic probability measures which belong to $\caT$.
We recall that a probability measure $\mu\in\caI$ is ergodic if, for any $f\in L^2(\mu)$, $S(t) f= f$ for all $t\geq0$ implies $f=\int fd\mu$ almost surely.
The set of ergodic probability measures coincides with $\caI_e$,  the set of extreme points of $\caI$.
Ergodicity is implied by mixing (see e.g. \cite[Section 6.3]{walters}) which is the property that for all $f,g\in L^2(\mu)$
\be\label{mix}
\lim_{t\to\infty}cov_\mu (f, S(t)g)= \lim_{t\to\infty}\int \left(f-\int f \dd\mu\right)\left(S(t) g-\int g \dd\mu\right)\dd \mu=0.
\ee
By bilinearity of the covariance and the fact that, for $\mu\in \caI$, $S(t)$ is a contraction in $L^2(\mu)$, it suffices to show
\eqref{mix} for a set of functions  $f,g\in W$, where $W$ is such that the vectorspace spanned by $W$ is a dense subspace in $L^2(\mu)$.

\subsection{Successful coupling}
 We say that the dual process admits a \emph{successful coupling} if  for all $n\in\N$,  $\xi, \xi'\in \Omega_n$
there exists a coupling $ \{ (\xi^{(1)}(t), \xi^{(2)}(t)): t\geq 0\}$ of the processes $\{\xi(t):t\geq 0\}$ starting from $\xi$ and $\xi'$ 
such that the following stopping time
\[
\tau_{\xi, \xi'}=\inf \{ T>0: \xi^{(1)}(t)=\xi^{(2)}(t)\ \text{for all $t\geq T$}\}
\]
is a.s. finite. We call this stopping time the \emph{coupling time}. For this paper, we will make use of the following consequence of a successful coupling,
\be\label{succ}
\lim_{t\to\infty} \widehat{\pee}_{\xi, \xi'}(\xi^{(1)}(t)\not=\xi^{(2)}(t))=0,
\ee
where  $\widehat{\pee}_{\xi, \xi'}$ is the path space probability measure of $ \{ (\xi^{(1)}(t), \xi^{(2)}(t)): t\geq 0\}$ starting from $(\xi,\xi')$.

\subsection{Characterization of tempered invariant probability measures}
The following theorem has two parts: the first parts is well-known and appears in various context, e.g. \cite{ligg} chapter 2, chapter 8. We give its proof in this general context mainly for the sake of completeness. The second part is inspired by \cite{kuoch} in the context of the inclusion process.

\bt\label{coup}
\begin{enumerate}
\item
If there exists a succesful coupling for the dual process, then
for every tempered invariant probability measure $\mu$ there exists a function $f:\N\to [0, \infty)$ such that
for all $\xi\in \Omega_n$,
\be\label{bingo}
\widehat{\mu}(\xi)= f(n).
\ee
\item
If $\mu$ is a probability measure on $\Omega$ which is tempered, invariant and ergodic then $f(n)=f(1)^n$.
As a consequence, $\mu$ is a product measure.
\end{enumerate}
\et

\bpr
To prove item 1, by duality and the assumption that $\mu$ is tempered, we can use Fubini's theorem combined with duality to compute
\beq
\hE_\xi \widehat{\mu}(\xi(t))= \int \hE_\xi (D(\xi(t), \eta)) \dd\mu(\eta)= \int \E_\eta D(\xi, \eta(t))  \dd\mu(\eta)= \int D(\xi, \eta)  \dd\mu(\eta)=\widehat{\mu}(\xi).
\eeq
Here in the last equality we used the invariance of $\mu$. We conclude that $\widehat{\mu}$ is a harmonic function, which
is bounded on each $\Omega_n$ by the assumption that $\mu$ is tempered.

Therefore, using the assumed existence of a successful coupling,  by \eqref{succ} together with dominated convergence, we obtain for $\xi, \xi'\in \Omega_n$ the following
\beq
\widehat{\mu}(\xi)&=& \hE_{\xi, \xi'} \widehat{\mu}(\xi^{(1)}(t))
\nonumber\\
&=&
\hE_{\xi, \xi'} \widehat{\mu}(\xi^{(1)}(t))I(\xi^{(1)}(t))=\xi^{(2)}(t))) + o(1)
\nonumber\\
&=&
\hE_{\xi, \xi'} \widehat{\mu}(\xi^{(2)}(t))I(\xi^{(1)}(t))=\xi^{(2)}(t))) + o(1)
\nonumber\\
&=&
\hE_{\xi'} (\widehat{\mu}(\xi(t))) + o(1)
=\widehat{\mu}(\xi') +o(1),
\eeq
where $o(1)\to 0$ as $t\to\infty$.
This gives that $\widehat{\mu}$ is constant on $\Omega_n$, i.e., $\widehat{\mu} (\xi)= f(n)$ for some $f:\N\to\R$.

To prove item 2, we start by using the ergodicity combined with duality to write
\beq\label{bourako}
&&\int D(\xi, \eta) \dd\mu(\eta)\int D(\xi', \eta)\dd\mu(\eta)
\nonumber\\
&=&
\lim_{T\to\infty} \frac1T\int_0^T \int D(\xi, \eta) S(t) D(\xi', \cdot)(\eta) \dd\mu(\eta)\dd t
\nonumber\\
&=&
\lim_{T\to\infty} \frac1T\int_0^T \int D(\xi, \eta) \hE_{\xi'} D(\xi(t). \cdot)(\eta) \dd\mu(\eta)\dd t,
\eeq
Here in the first step we used that $\frac{1}{T}\int_0^T S(t)D(\xi',\cdot)(\eta)\dd t \to \int D(\xi',\eta)d\mu(\eta)$ holds almost surely and in $L^1(\mu)$, and in the second step we used duality.
Now let $\xi\in \Omega_n,\ \xi'\in \Omega_m$ be given. By item 1, we have $f(n)=\widehat{\mu}(\xi),\ f(m)=\widehat{\mu}(\xi')$.
By the homogeneous factorization of $D$, we have for $\xi\in\Omega_n,\ \xi'\in\Omega_m,\ \xi\perp\xi'$
\[
D(\xi, \eta) D(\xi', \eta)= D(\xi+\xi', \eta),
\]
and therefore, if $\xi \perp \xi'$, we have that
\[
\int D(\xi, \eta) D(\xi', \eta) \dd\mu(\eta)= f(n+m).
\]
Now combine \eqref{bourako} and  the assumption \eqref{spreadout} with the temperedness of the probability measure $\mu$ to
conclude
\beq
f(n)f(m)&=&
\int D(\xi, \eta) \dd\mu(\eta) \int D(\xi', \eta)\dd\mu(\eta)
\nonumber\\
&=&\frac1T\int_0^T \int D(\xi, \eta) \hE_{\xi'} D(\xi(t), \cdot)(\eta) \dd\mu(\eta)  + o(1)
\nonumber\\
&=&
\frac1T\int_0^T \int D(\xi, \eta) \hE_{\xi'} D(\xi(t), \cdot)(\eta)  I(\xi(t)\perp \xi)\dd\mu(\eta) +o(1)
\nonumber\\
&=&
\frac1T\int_0^T \int  \hE_{\xi'} D(\xi+\xi(t), \cdot)(\eta)  I(\xi(t)\perp \xi)\dd\mu(\eta) +o(1)
\nonumber\\
&=&
\frac1T\int_0^T \E_{\xi'}\left( I(\xi(t)\perp \xi) f(n+m)\right) + o(1)
\nonumber\\
&=&
f(n+m) + o(1),
\eeq
where $o(1)\to 0 $ as $T\to\infty$ via \eqref{cscond}, \eqref{temp}.
This proves that for all $\xi\in\Omega_n, \xi'\in \Omega_m$ we have
\[
\widehat{\mu}(\xi+\xi')= \widehat{\mu}(\xi)\widehat{\mu}(\xi'),
\]
which gives $f(n)= f(1)^n$.
This implies
that for all $x_1, \ldots, x_n\in G$, $k_1, \ldots, k_n\in \N$, 
\[
\int \prod_{i=1}^n d(k_i, \eta_{x_i}) \dd\mu(\eta)= f(1)^{k_1+\ldots+ k_n}= \prod_{i=1}^n \int d(k_i, \eta) \dd\mu(\eta),
\]
which implies that $\mu$ is a product measure.
\epr

In the next theorem we prove that invariant tempered product probability measures are ergodic. This, combined with Theorem \ref{coup}, completes the characterization of the set of
tempered ergodic probability measures.

We introduce
     $$K:=\left\{\int D(\delta_x, \eta)d\mu(\eta): \mu \ \text{is an invariant tempered product probability measure}\right\}.$$
\bt\label{ergomeas}
\begin{enumerate}
\item If $\mu$ is an invariant tempered product probability measure then it is ergodic.
\item
If there exists a successful coupling for the dual process,
then the only tempered invariant probability measures which are ergodic are the product probability measures $\mu_\theta$ for which
$\widehat{\mu}_\theta (\xi)= \theta^{|\xi|}$ with $\theta\in K$.
\item If there exists a successful coupling for the dual process,
then
\[
(\caT\cap \caI)_e= \caT\cap\caI_e= \{ \mu_\theta: \theta\in K\},
\]
where $(\caT\cap \caI)_e$ are the extreme points of $\caT\cap \caI$.
\end{enumerate}
\et
\bpr
For item 1, as indicated in the section where we defined mixing, it suffices to show that
\be\label{dmix}
\lim_{t\to\infty} \int D(\xi, \eta) \E_\eta D(\xi', \eta(t)) \dd\mu(\eta)=\widehat{\mu}(\xi)\widehat{\mu}(\xi')
\ee
because by assumption the vectorspace spanned by the $D(\xi, \cdot)$ is dense in $L^2(\mu)$.

Using duality, the assumption \eqref{spreadout}, the product character of the probability measure $\mu$ as well
as the assumed temperedness of $\mu$ (cf. \eqref{cscond}), and denoting $o(1)$ for a term which converges to zero as $t\to\infty$, we get
\beq
\int D(\xi, \eta) \E_\eta D(\xi', \eta(t)) \dd\mu(\eta)
\nonumber
 &=& \int D(\xi, \eta) \hE_{\xi'} D(\xi(t), \eta) \dd\mu(\eta)
\nonumber\\
 &=& \hE_{\xi'}\int D(\xi, \eta)  D(\xi(t), \eta) I(\xi(t)\perp \xi) \dd\mu(\eta) + o(1)
\nonumber\\
&=& \left(\int D(\xi, \eta) \dd\mu(\eta)\int  \hE_{\xi'}D(\xi(t), \eta) \dd\mu(\eta)  \right) + o(1)
\nonumber\\
&=&\int D(\xi, \eta) \dd\mu(\eta)\int \E_\eta(D(\xi', \eta(t))) \dd\mu(\eta) + o(1)
\nonumber\\
&=&
\int D(\xi, \eta) \dd\mu(\eta)\int D(\xi', \eta) \dd\mu(\eta) + o(1)
\nonumber\\
&=& \widehat{\mu}(\xi)\widehat{\mu}(\xi') + o(1).
\eeq
Item 2 follows immediately from item 1 and item 2 of Theorem \ref{coup}.
To prove item 3, we only have to prove that
\[
(\caT\cap\caI)_e= \caT\cap \caI_e.
\]
The implication ``$\mu\in \caT\cap \caI_e$ implies $\mu\in (\caT\cap\caI)_e$'' is obvious.
To prove the other implication, start from $\mu\in (\caT\cap\caI)_e$ and assume that we have
\[
\mu= \lambda \nu_1+(1-\lambda) \nu_2,
\]
with $\nu_1, \nu_2\in \caI$ and $0<\lambda<1$. Then we have, because $\mu\in\caT$, that $\nu_1, \nu_2\in \caT$, and therefore,
$\nu_1, \nu_2\in\caT\cap\caI$. But then, using that $\mu\in (\caT\cap\caI)_e$ we have
$\mu=\nu_1=\nu_2$, therefore we conclude that $\mu\in \caI_e$.
\epr

\section{Existence of a Successful Coupling}\label{4}
We can now state the main result of our paper, i.e., the characterization of the tempered ergodic probability measures for the three models of Section \ref{Models}.
\begin{theorem}\label{main theorem}
For all $s \in \{-1,0,1\}$, the probability measures $\mu_{\rho,s}$ defined in Proposition \ref{invariant} are the only tempered ergodic probability measures of the process generated by $\mathscr{L}_s$.
\end{theorem}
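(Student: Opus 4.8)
The plan is to derive Theorem~\ref{main theorem} by checking, for each of the three models, that the abstract hypotheses of Section~\ref{3} are satisfied, and then invoking item~3 of Theorem~\ref{ergomeas}; the only genuinely new work left is the construction of a successful coupling for each dual process, which is the content of this section. First I would verify the structural assumptions. The duality functions $D_s$ of Theorem~\ref{duality} are manifestly in homogeneous factorized form $D_s(\xi,\eta)=\prod_v d_s(\xi_v,\eta_v)$ with $d_s(0,n)=1$ and $d_s(k,\cdot)$ a polynomial of degree exactly $k$ in the occupation variable (falling factorials, times a $\xi$-dependent constant); since falling factorials of degree $0,\dots,k$ span all polynomials of degree $\le k$, the spanning assumption on $\caD$ holds. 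Particle number is conserved in all six processes (original and dual), and on each $\Omega_n$ the dual chain is irreducible and not positive recurrent because its random-walk part is a genuine (symmetric, for $s=\pm1$; possibly drifted, for $s=0$) random walk on $\Z^d$, which is null recurrent or transient; so $\caI$ restricted to $\Omega_n$ contains no invariant probability and the spread-out condition \eqref{spreadout} follows from the local central limit / transience behaviour of the dual walk. Temperedness of $\mu_{\rho,s}$ is immediate: Binomial, Negative Binomial and Poisson product measures have all moments finite uniformly in the site, and by Proposition~\ref{char} the $D_s$-transform factorizes, so condition \eqref{temp} holds; the moment-problem uniqueness in Proposition~\ref{char} gives item~2 of Definition~\ref{tempdef}, and density of $\caD$ in $L^2(\mu_{\rho,s})$ follows from Stone--Weierstrass in the exclusion case and from the analytic-moment growth of Poisson/Negative Binomial in the other two.

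Next I would compute the set $K$ appearing in Theorem~\ref{ergomeas}. For each $s$ one evaluates $\int D_s(\delta_v,\eta)\,d\mu_{\rho,s}(\eta)=\int \eta_v\,c_s\,d\mu_{\rho,s}(\eta)$ (where $c_s$ is the normalizing constant $1/\alpha$, $1/\alpha$, $1$ respectively from the formulas for $d_s$) and reads off the image as $\rho$ ranges over its admissible interval: $K=[0,1]$ for $s=-1$, $K=[0,1)$ for $s=1$ (after the reparametrization $\theta=\rho$), and $K=[0,\infty)$ for $s=0$. This identifies the family $\{\mu_\theta:\theta\in K\}$ of Theorem~\ref{ergomeas} with exactly the family $\{\mu_{\rho,s}\}$ of Proposition~\ref{invariant}, so once the successful coupling is in hand the theorem is proved.

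The heart of the section, and the main obstacle, is constructing the successful coupling for the dual dynamics on each $\Omega_n$. For the $\widehat{\RTP}$ process ($s=0$) the dual particles are independent, so it suffices to couple two single $\widehat{\RTP}$ particles and then apply the coupling independent-particle by independent-particle; a single $\widehat{\RTP}$ particle is a random walk on $\Z^d\times S$, and I would first run an irreducible-finite-state coupling to synchronize the internal states $\sigma$ (after which both copies see the same jump rates), and then apply an Ornstein-type coupling on the $\Z^d$ component --- matching the difference walk to zero coordinate by coordinate --- exactly as in the non-interacting case; the drift $v(\sigma)$ is harmless once the internal states agree. For the interacting dual processes ($s=\pm1$, which are self-dual, so the dual is again multi-layer SEP resp.\ SIP with finitely many particles) independence fails and one cannot couple particle by particle. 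Here I would follow Liggett's argument for SEP (\cite{ligg}, Ch.~8) and Kuoch's argument for SIP (\cite{kuoch}): first let the two finite configurations ``spread out'' so that with high probability no two of the $2n$ particles occupy neighbouring sites, at which moment the exclusion/inclusion interaction is inactive and the particles move as independent symmetric random walks on their respective layers; then carry out the internal-state coupling followed by the Ornstein coupling of the now-independent symmetric walks, pairing the $i$th particle of one copy with the $i$th of the other, while arguing that the probability of an interaction re-engaging before all pairs have met can be made arbitrarily small. The delicate point --- and where most of the work goes --- is the bookkeeping that keeps the spread-out property and the successive pairwise meetings compatible, i.e.\ showing that one can interleave ``spreading out'', ``state-coupling'', and ``position-coupling'' so that the global coupling time $\tau_{\xi,\xi'}$ is almost surely finite; this is essentially a quantitative transience/recurrence estimate for the symmetric random walk on $\Z^d$ combined with a union bound over the finitely many particles.
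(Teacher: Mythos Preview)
Your plan is essentially the paper's own: reduce Theorem~\ref{main theorem} to Theorem~\ref{ergomeas} by verifying the structural hypotheses of Section~\ref{3} and then constructing a successful coupling for each dual process, via internal-state synchronization followed by an Ornstein-type argument for independent particles, and via the ``spread out, then couple as independent walkers'' idea of Liggett/Kuoch for the interacting models.

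Two points where your sketch diverges from, or is looser than, the paper's execution are worth flagging. First, your description of the interacting case (``spread out so that with high probability no two particles are neighbours, then the interaction is inactive\ldots the probability of an interaction re-engaging before all pairs have met can be made arbitrarily small'') is really the \emph{transient} argument ($d\ge 3$); in the recurrent regime $d\le 2$ that probability is \emph{not} small, and the paper handles multi-layer SEP there by a different device, namely lifting to the ladder graph $V\times\{1,\dots,\alpha\}$ so that the process becomes an ordinary simple exclusion on a countable graph, for which Liggett's coupling applies directly. For SIP the recurrent case is handled (as in \cite{kuoch}) by repeated independent coupling attempts, each with a uniformly positive success probability. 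Second, for the $\widehat{\RTP}$ dual you implicitly assume the random-walk component is present; when $\kappa=0$ the paper needs the additional hypothesis $\mathrm{vct}\{\mathscr{R}(v)\}=\Z^d$ so that the difference process, decomposed along the velocity directions, is a collection of recurrent one-dimensional walks that can be killed coordinate by coordinate. Finally, a small slip: for $s=1$ one has $\int D_1(\delta_v,\eta)\,d\mu_{\rho,1}=\rho/(1-\rho)$, so $K=[0,\infty)$ rather than $[0,1)$; this does not affect the conclusion since $\rho\mapsto\rho/(1-\rho)$ is a bijection from $[0,1)$ onto $K$.
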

By Theorem \ref{ergomeas} we  need to show the the dual processes defined in Section \ref{dual processes} satisfy the assumptions from Section \ref{assumptions}, along with the existence of a successful coupling. We will start by proving that the original assumptions hold. 

The irreducibility of the processes on the sets $\Om_{s,n} = \{\xi \in \Om_{s,f} : \sum_{x}\xi_x=n\}$ is clear from the irreducibility of the single particle random walk. For \eqref{spreadout}, let $\xi'\in \Om_{s,n}$  and let $(v_i)_{i=1}^n\subset V$ be the coordinates of the particles in the configuration $\xi'$. Note then that for every $\xi \in \Om_{s,f}$
\begin{equation*}
    \widehat{\p}_\xi(\xi(t)\not\perp \xi') \leq \sum_{i=1}^n \widehat{\p}_\xi(\xi_{v_i}(t) \geq 1) \leq\sum_{i=1}^n \widehat{\E}_\xi[\xi_{v_i}(t)].
\end{equation*}
We are able to write $\xi_{v_i}(t) = \frac{1}{c_{\alpha,s}} D_s(\delta_{v_i},\xi(t))$ cf. Remark \ref{single particle duality}. Hence, using duality
\begin{equation*}
    \widehat{\E}_\xi[\xi_{v_i}(t)] =\E_{v_i}[\xi_{v(t)}] = \sum_{w\in V} p_t(v_i,w)\xi_w,
\end{equation*}
where $v(t)$ is the path of a particle under the dynamics of the original process starting from $v_i$, and $p_t(v, w)$ is the corresponding transition kernel. Here we also used that the dual of the dual is the original process (cf. Theorem \ref{duality}, item 3). Because $\xi$ is finite, the sum on the right-hand side is actually a finite sum,  and so \eqref{spreadout} follows if $p_t(v,w)\to 0$ as $t\to \infty$ for all $v,w \in V$. To see that this holds, note that for all $x,y,z\in\mathbb{Z}^d$ and $\sigma,\sigma'\in S$  we have that 
$$p_t((x,\sigma),(y,\sigma')) = p_t((x+z,\sigma),(y+z,\sigma')).$$ 
Therefore, there can not exist an invariant probability measure for the single particle random walk, which means that the random walk is either null-recurrent or transient. Hence we indeed have that $\lim_{t\to\infty} p_t(v,w)=0$ for all $v,w\in V$ (see e.g. \cite[p. 26]{Chung}).

In order to prove the existence of a successful coupling for the dual processes we proceed as follows:

First, we consider multi-layer symmetric independent random walkers (IRW)  on $V$ where the jump rates depend on the layer, i.e., $\widehat{\text{RTP}}$ with $\lambda = 0$ in \eqref{transition rtp}, which will be needed for the proof of our models. Second, we deal with interacting particles, distinguishing the transient and recurrent cases for both models. Finally, we prove the existence of a successful coupling for a general RTP, distinguishing the cases where there are random walk jumps and the case where there are only active jumps. 

In order to prove the successful coupling of finite configurations with identical particle numbers, we pass to a more convenient labeled particle configuration, i.e., when $\xi \in \mathbb{N}^V$ with $\sum_{v \in V} \xi_v = n$, then $\xi = \sum_{i=1}^n \delta_{(x_i,\sigma_i)}$ and we identify $\xi$ with $\big((x_1,\sigma_1), ..., (x_n,\sigma_n)\big)\in V^n$ where over the course of time, these initially chosen labels remain fixed. With this prescription, the configuration process $\xi(t)$ induces a unique process $\big((X_1(t),\sigma_1(t)), ..., (X_n(t),\sigma_n(t))\big)$ on $V^{n}$.

\subsection{Successful coupling of multi-layer symmetric IRW}
The proof of existence of a successful coupling of multi-layer symmetric IRW makes use of the Ornstein-coupling which is also used for the existence of a successful coupling of simple symmetric IRW on $\mathbb{Z}^d$. The argument can be found in e.g. \cite{Hollander}, however for completion we will give a proof here as well.
\begin{proposition}\label{coupling rw zd}
For all $n\in\mathbb{N}$ and  $\textbf{y}^{(1)},\textbf{y}^{(2)} \in \left(\mathbb{Z}^d\right)^n $, there exists a successful coupling $\big(\textbf{\emph{Y}}^{(1)}(t), \textbf{\emph{Y}}^{(2)}(t)\big)$ of simple symmetric IRW on $\mathbb{Z}^d$ with initial conditions $\textbf{\emph{Y}}^{(1)}(0) =\textbf{y}^{(1)}$ and $\textbf{\emph{Y}}^{(2)}(0) =\textbf{y}^{(2)}$.
\end{proposition}
\begin{proof}
 Since the particles move independently,  we only have to show that there exists a successful coupling of two simple symmetric random walkers on $\mathbb{Z}^d$. Namely, if we can successfully couple any two particles in the two configurations $\textbf{Y}^{(1)}(t) = \big( Y^{(1)}_1(t),...,Y^{(1)}_n(t)\big)$ and $\textbf{Y}^{(2)}(t)= \big( Y^{(2)}_1(t),...,Y^{(2)}_n(t)\big)$, then every stopping time 
 \begin{equation*}
     \tau_i :=\inf\left\{T>0: Y^{(1)}_i(t) = Y^{(2)}_i(t) \text{ for all $t\geq T$}\right\}
 \end{equation*}
 is a.s. finite. Note that the coupling time of $\textbf{Y}^{(1)}(t)$ and $\textbf{Y}^{(2)}(t)$ is then equal to $\tau = \max_{1\leq i\leq n}\tau_i$, which is therefore also a.s. finite. 
 
For the successful coupling of the pair $ Y_i^{(1)}(t)$ and $Y_i^{(2)}(t)$, let $\{e_1, e_2, ..., e_d\}$ be the standard basis vectors of $\mathbb{Z}^d$. Then we can write 
\begin{equation*}
    Y_i^{(1)}(t) - Y_i^{(2)}( t) = a_1(t)e_1 + a_2(t)e_2 + ... + a_d(t)e_d.
\end{equation*}
Here every $a_k(t)$ is a simple symmetric random walk on $\mathbb{Z}$. Now define the stopping times 
\begin{equation*}
    \tau_{a_k}:= \inf\{t\geq 0: a_k(t) = 0\}.
\end{equation*}
It is clear that every $\tau_{a_k}$ is a.s. finite. After time $\tau_{a_k}$ we let the processes $Y_i^{(1)}(t)$ and $Y_i^{(2)}(t)$ copy each others jumps in the direction of $e_k$, i.e. $a_k(t) = 0$ for all $t\geq \tau_{a_k}$. The proof is now finished after the observation that $\tau_i = \max_{1\leq k\leq d}\tau_{a_k}$.
\end{proof}

\begin{proposition}\label{coupling rw}
For all $n\in\mathbb{N}$ and  $\textbf{y}^{(1)},\textbf{y}^{(2)} \in V^n$, there exists a successful coupling $\big(\textbf{\emph{Y}}^{(1)}(t), \textbf{\emph{Y}}^{(2)}(t)\big)$ of multi-layer symmetric IRW on $V$, i.e., $\widehat{\text{RTP}}$ with $\lambda=0$, with initial conditions $\textbf{\emph{Y}}^{(1)}(0) =\textbf{y}^{(1)}$ and $\textbf{\emph{Y}}^{(2)}(0) =\textbf{y}^{(2)}$.
\end{proposition}
\bpr
 Similarly as in the proof of Proposition \ref{coupling rw zd},  we only have to show that there exists a successful coupling of two  random walkers $ (Y^{(1)}(t),\sigma^{(1)}(t))$ and $(Y^{(2)}(t),\sigma^{(2)}(t))$ on $V$. Initially we let the two random walkers evolve independently, up until the stopping time $\varsigma$ defined as
 \begin{equation}\label{internal stopping time}
    \varsigma := \inf{\{t\geq0: \sigma^{(1)}(t) = \sigma^{(2)}(t)\}}.
\end{equation}
Note that this stopping time is a.s. finite since the set $S$ is finite and the transition rates $\{c(\sigma,\sigma'): \sigma,\sigma' \in S\}$ on $S$ are irreducible. After the stopping time $\varsigma$, we let the two random walkers copy each others internal state jumps, i.e., we define the processes $(\tilde{Y}^{(i)}(t), \tilde\sigma^{(i)}(t))$ for $i=1,2$ such that  $(\tilde{Y}^{(i)}(t), \tilde\sigma^{(i)}(t)) = (Y^{(i)}(t), \sigma^{(i)}(t))$ for $t\leq \varsigma$ and $\tilde\sigma^{(i)}(t+\varsigma) = \tilde\sigma(t)$ for $t\geq0$, where $ \tilde\sigma(t)$ is an internal state process starting from $\tilde\sigma(0) = \sigma^{(1)}(\varsigma) = \sigma^{(2)}(\varsigma)$.

We can again write 
\begin{equation*}
    \tilde{Y}^{(1)}(\varsigma+t) - \tilde Y^{(2)}(\varsigma+ t) = a_1(t)e_1 + a_2(t)e_2 + ... + a_d(t)e_d,
\end{equation*}
where every $a_k(t)$ is a continuous-time nearest neighbor symmetric random walk on $\mathbb{Z}$ with (time-dependent) transition rates $2\pi_{\tilde\sigma(t)}(e_k)>0$. We again define the stopping times 
\begin{equation*}
    \tau_{a_k}:= \inf\{t\geq 0: a_k(t) = 0\},
\end{equation*}
and after time $\tau_{a_k}$ we let the processes $(\tilde{Y}^{(1)}(t), \tilde\sigma^{(1)}(t))$ and $(\tilde{Y}^{(2)}(t), \tilde\sigma^{(2)}(t))$ copy each others jumps in the direction of $e_k$. Note that the coupling time $\tau$ is now equal to $\tau = \varsigma +  \max_{1\leq k\leq d}\tau_{a_k}$, which is a.s. finite.
\epr

\subsection{Successful coupling of multi-layer SEP}
Let $\textbf{X}^{(1)}(t)$ and $\textbf{X}^{(2)}(t)$ be two finite configurations of multi-layer SEP particles with the same number of particles. We split the proof of the successful coupling up in two parts, namely the transient case and the recurrent case. 

\subsubsection{Transient case} Assume $d\geq 3$, then the random walk corresponding to the transition rates $\pi_\sigma(\cdot)$ is transient on $\mathbb{Z}^d$ for every $\sigma \in S$. Let $\textbf{Y}(t)$ be  an IRW process on $\mathbb{N}^V$ with finitely many particles. Since the transition rates are transient, for any $R>1$ and any starting position $\textbf{y}=(y_1, y_2, ..., y_n)$ such that $||y_i-y_j||_1>R$  for all $i\neq j$, with positive probability $p(R)$ the particles in $\textbf{Y}(t)$ starting from $\textbf{y}$ will never have \emph{collisions}. Here a collision means that there is a $t>0$ such that two particles $(Y_1(t),\sigma_1(t))$ and $(Y_2(t),\sigma_2(t))$ in the configuration $\textbf{Y}(t)$ are at neighboring positions of each other, i.e. we either have
$$||Y_1(t) - Y_2(t)||_1 = 1 \ \ \ \text{and}\ \ \ \sigma_1(t) = \sigma_2(t),$$
or
$$\hspace{7.5mm}Y_1(t) = Y_2(t) \ \ \  \ \ \ \ \ \ \ \ \ \ \text{and} \ \ \ c(\sigma_1(t),\sigma_2(t))>0.$$
It follows that, conditional on the event that there are no collisions, the multi-layer SEP particles move the same as multi-layer IRW particles. 

We now let the configurations $\textbf{X}^{(1)}(t)$ and $\textbf{X}^{(2)}(t)$ move for some time $T>0$. We denote by $R(T)$ the minimal distance between two particles in the same configuration at time $T$, i.e. $$R(T):=\min_{i\neq j}\{||X_i^{(1)}(t)-X_j^{(1)}(t)||_1, ||X^{(2)}_i(t) - X^{(2)}_j(t)||_1\},$$
with $X_i^{(1)}(t) \in \mathbb{Z}^d$ the position of particle $i$ in configuration $\textbf{X}^{(1)}(t)$. After time $T$, we start the coupling attempt by letting the SEP particles copy the jumps of IRW particles starting from $\textbf{X}^{(1)}(T)$ and $\textbf{X}^{(2)}(T)$. By Proposition \ref{coupling rw}, this attempt is successful with probability larger than $p(R(T))$. This proof is now finished by noting that in the transient case, we have that $R(T)\to \infty$ as $T\to\infty$ and $p(R) \to 1$ as $R\to\infty$.

\subsubsection{Recurrent case} For the case of $d\leq2$, where every $\pi_\sigma(\cdot)$ is recurrent, we will define the multi-layer SEP process on the  \emph{ladder graph} (see e.g. \cite{Giar}), i.e., we define the state space $\Om_{-1}' := \{0,1\}^{V\times A}$ with $A = \{1, 2,..., \alpha\}$). This space can be seen as the space  where on every site $v \in V$ there is a \emph{ladder} with $\alpha$ steps, and every particle chooses a step of this ladder if it moves to a new site. We can easily go back from a configuration $\eta' \in \Om_{-1}'$ to a configuration in $\eta \in \Om_{-1}$ by setting
\begin{equation}\label{ladder}
   \eta(x,\sigma) = \sum_{i=1}^\alpha \eta'(x,\sigma,i),\ \ \ \ \text{for all $(x,\sigma) \in V$}. 
\end{equation}

We now define the process on $\Om_{-1}'$ through the generator 
$$\mathscr{L}_{-1}'f(\eta') = \sum_{j,k=1}^\alpha \sum_{v,w\in V} \frac{p(v,w)}{\alpha} \eta'_{(v,j)}\left(\alpha - \eta'_{(w,k)}\right) \nabla_{(v,j),(w,k)}f(\eta'),$$
i.e., it is the simple symmetric exclusion process on $V\times A$ where particles choose a step on the ladder $A$ uniformly. It is easy to see that $\mathscr{L}_{-1}'$ on $\Om_{-1}'$  corresponds to the generator $\mathscr{L}_{-1}$ on $\Om_{-1}$  through $\eqref{ladder}$. The successful coupling of the multi-layer SEP  now follows from the successful coupling of the simple symmetric exclusion process on $\Om_{-1}'$. Since the set $V\times A$ is countable, this result is already known, for example in \cite[Chapter VIII]{ligg}.

\subsection{Successful coupling of multi-layer SIP}
The successful coupling of SIP on $\mathbb{Z}^d$ has already been shown by Kuoch and Redig in \cite{kuoch} and can be extended to our framework of multi-layer particles. In the transient case, this proof uses the same principle as the proof of a successful coupling for multi-layer \SEP\ above, i.e., we let the particles spread out far enough such that there are no collisions with positive probability, after which the particles move like independent random walkers for which there exists a successful coupling by Proposition \ref{coupling rw}. The proof of the recurrent case actually uses a similar approach as in the transient case, in which it lets the particles spread out over time and afterwards makes a coupling attempt. The probability that this coupling attempt is successful has non-zero probability. If the attempt fails, i.e., there is a collision, a new coupling attempt is made. Since these coupling attempts have non-zero probability of success and  are independent, there will be a successful coupling eventually. For more details on both proofs, see \cite{kuoch}.

\subsection{Successful coupling of \texorpdfstring{$\widehat{\textbf{RTP}}$}{TEXT}}
For the $\widehat{\text{RTP}}$ process we will also look at two cases, namely the case with random walk jumps of particles, i.e., $\kappa>0$ in \eqref{transition rtp}, and without random walk jumps. For the case of $\kappa>0$, we will see that the successful coupling of $\widehat{\text{RTP}}$ is a corollary of Proposition \ref{coupling rw} by copying the active and internal jumps of the process. If $\kappa=0$, then we will need  the additional assumption given in \eqref{irreducible internal}. With this assumption we are able to use a similar argument as in the proof of Proposition \ref{coupling rw} to prove the existence of a successful coupling.

\subsubsection*{Successful coupling of $\widehat{\text{RTP}}$ with $\kappa>0$}
For $(X(t),\sigma(t))$ a single $\widehat{\text{RTP}}$ particle, we can decouple the dynamics of $X(t)$ through the following decomposition,
\begin{equation*}\begin{split}\label{sum}
X(t) = Y(t) + Z(t),
\end{split}\end{equation*}
where $Y(t)$ is a symmetric random walk starting from $X(0)$ and $Z(t)$ are the active jumps starting from $0$, both of which are dependent of  $\sigma(t)$.

Since we are dealing with configurations of independent random walkers again, we only have to prove the existence of a successful coupling of two random walkers $(X^{(1)}(t),\sigma^{(1)}(t))$ and $(X^{(2)}(t),\sigma^{(2)}(t))$. Similarly as in the proof of Proposition \ref{coupling rw}, we let the two random walkers evolve independently up until the 
 stopping time $\varsigma$ defined as in \eqref{internal stopping time}. Afterwards, we let the random walkers copy each others internal state jumps, i.e., we define the processes $(\tilde{X}^{(i)}(t), \tilde{\sigma}^{(i)}(t))$ for $i=1,2$ as $(\tilde{X}^{(i)}(t), \tilde{\sigma}^{(i)}(t))=(X^{(i)}(t), \sigma^{(i)}(t))$ for $t\leq \varsigma$, and 
\begin{equation*}
    (\tilde{X}^{(i)}(\varsigma+t), \tilde{\sigma}^{(i)}(\varsigma+t)) = ( \tilde{Y}^{(i)}(t) + \tilde{Z}(t), \tilde{\sigma}(t)) 
\end{equation*}
where $\tilde{Y}^{(i)}(t)$ is again a symmetric random walk  starting from $X^{(i)}(\varsigma)$, $\tilde{Z}(t)$ are again the active jumps starting from 0, and $\tilde{\sigma}(t)$ is as defined in the proof of Proposition \ref{coupling rw}.  
Note that the difference of the positions of the two processes is now equal to
\begin{equation*}\begin{split}
\tilde{X}^{(1)}(\varsigma+t)& -\tilde{X}^{(2)}(\varsigma+t)=\tilde{Y}^{(1)}(t) -   \tilde{Y}^{(2)}(t),\\
\end{split}\end{equation*}
i.e., the difference between two symmetric random walkers.  The result now follows from Proposition \ref{coupling rw}.
 
\subsubsection*{Successful coupling of $\widehat{\text{RTP}}$ with $\kappa=0$ and $\lambda>0$}
Just as in the previous case, for two $\widehat{\text{RTP}}$ processes $(X^{(1)}(t),\sigma^{(1)}(t))$ and $(X^{(2)}(t),\sigma^{(2)}(t))$ we define the stopping time $\varsigma$ as in \eqref{internal stopping time}, and set up the processes $(\tilde{X}^{(i)}(t), \tilde{\sigma}^{(i)}(t))$ for $i=1,2$ such that  $(\tilde{X}^{(i)}(t), \tilde{\sigma}^{(i)}(t))=(X^{(i)}(t), \sigma^{(i)}(t))$ for $t\leq \varsigma$ and $\sigma^{(i)}(\varsigma +t) = \tilde\sigma(t)$ for $t>0$.

By \eqref{irreducible internal} we can now write 
\begin{equation}\label{pong}
    \tilde{X}^{(1)}(\varsigma + t) - \tilde{X}^{(2)}(\varsigma + t) = b_1(t)v(\sigma_1) + b_2(t)v(\sigma_2) + ... + b_m(t) v(\sigma_m)
\end{equation}
for $m=|S|$, $\sigma_k \in S$ and $b_k(t) \in \mathbb{Z}$ for all $k$ and $t\geq 0$. For every $k$, the  couple $(b_k(t), \tilde{\sigma}(t))$ is a random walk on $\mathbb{Z}\times S$ with the following dynamics: 
\begin{itemize}
    \item[-] If $\tilde{\sigma}(t) = \sigma_k$, $b_k(t)$ moves as a continuous-time nearest neighbor symmetric random walker on $\mathbb{Z}$ with rate $\lambda$. 
    \item[-] If $\tilde{\sigma}(t) \neq \sigma_k$, $b_k(t)$ does not move.
\end{itemize}
Since $S$ is finite and the transition rates $c(\sigma,\sigma')$ are irreducible, these random walks are recurrent as their discrete counterparts are recurrent. This implies that the stopping times
\begin{equation}\label{stopping rtp}
    \tau_{b_k}:= \inf\{t\geq 0: b_k(t) = 0\}
\end{equation}
are almost surely finite. After every time $\tau_{b_k}$, we let the process $\tilde{X}^{(2)}(\varsigma+t)$ copy the jumps of $\tilde{X}^{(1)}(\varsigma+t)$ in the direction of $v(\sigma_k)$. For the coupling time $\tau$, we then again have that $\tau = \varsigma + \max_{1\leq k\leq m} \tau_{b_k}$.

\begin{remark}
We are able to extend the results of this section to the case where we take $S$ countable. We would need the additional assumption that the transition rates $c(\sigma,\sigma')$ are positive recurrent, which ensures that we return to any $\sigma \in S$ in almost surely finite time. For the interacting particles we would then distinguish between the cases where the transition rates $p(v,w)$ in \eqref{generator} are transient and recurrent (note that the latter need not be the case where $d\leq 2$). For the $\widehat{\RTP}$, the positive recurrence of $c(\sigma,\sigma')$ ensures that every stopping time $\tau_{b_k}$ in \eqref{stopping rtp} is almost surely finite.
\end{remark}

\end{document}